\newcommand{\sF}{{\mathcal F}}
\newcommand{\F}{{\mathbb{F}}}
\newcommand{\Z}{{\mathbb Z}}
\theoremstyle{definition}
\newtheorem{theorem}{Theorem}
\newtheorem{definition}{Definition}
\newtheorem{corollary}{Corollary}
\newtheorem{lemma}{Lemma}
\newtheorem{example}{Example}
\theoremstyle{remark} \newtheorem*{remark}{Remark}
\providecommand{\keywords}[1]
{
	\small	
	\textbf{{Keywords:}} #1
}
\providecommand{\amssub}[1]
{
	\small	
	\textbf{{Subject Classification:}} #1
}
\def\_email#1@#2\q_nil{%
	\href{mailto:#1@#2}{{\emailfont #1\emailampersat #2}}
}
\newcommand\emailfont{\sffamily}
\newcommand\emailampersat{{\color{black}\small@}}
\newcommand{\be}{\begin{eqnarray}}
\newcommand{\ee}{\end{eqnarray}}
\newcommand{\nn}{{\nonumber}}
\DeclareMathOperator{\Tr}{Tr}
\DeclareMathOperator{\rTr}{rTr}
\begin{document}
\title{The number of irreducible polynomials over finite fields with vanishing trace and reciprocal trace}
\author{Yağmur Çakıroğlu and Oğuz Yayla \\
	\small \texttt{Dept.\ of Mathematics, Hacettepe University},\\
	 \small \texttt{06800, Beytepe, Ankara, Turkey}\\
	\small \texttt{yagmur.cakiroglu@hacettepe.edu.tr}\\
	 \small \texttt{oguz.yayla@hacettepe.edu.tr}
\and Emrah Sercan Yılmaz\\
\small \texttt{Dept.\ of Mathematics, Boğaziçi University},\\
	\small \texttt{34342, Bebek, İstanbul, Turkey}\\
	\small\texttt{emrahsercanyilmaz@gmail.com}}

\maketitle
\begin{abstract} \noindent We present the formula for the number of monic irreducible polynomials of degree $n$ over the finite field $\F_q$ where the coefficients of $x^{n-1}$ and $x$ vanish for $n\ge3$. In particular, we give a relation between rational points of algebraic curves over finite fields and the number of elements $a\in\F_{q^n}$ for which Trace$(a)=0$ and Trace$(a^{-1})=0$. Besides, we apply the formula to give an upper bound on the number of distinct constructions of a family of sequences with good family complexity and cross-correlation measure.\\
\keywords{Irreducible polynomials, Finite fields, Trace function, Algebraic curves, Pseudorandom sequences}\\
\amssub{11T06,11G20,94A55}
\end{abstract}

\section{Introduction}
Let $r$ be a positive integer, $p$ be a prime number and $q=p^r$, $\F_q$ be the finite field with $q$ elements and let $I_q(n)$ denote the number of monic irreducible polynomials of degree $n$ over $\F_q[x]$. 
It is a well-known formula given by Gauss \cite{Gauss} that
\begin{equation}\nn I_q(n)=\frac{1}{n}\sum_{d|n}\mu(d)q^{n/d}.
\end{equation} 
Let $I_q(n,\gamma_1,\ldots,\gamma_k)$ denote the number of monic irreducible polynomials over $\F_q$ of degree $n$ whose first $k$ coefficients following the leading one is prescribed to $\gamma_1, \ldots, \gamma_k\in\F_q$, respectively. Carlitz in \cite{C1952} showed that
\begin{equation} \nn \label{carlitz} 
I_q(n,\gamma)=\frac{1}{qn}\sum_{d|n,p\nmid d}\mu(d)q^{n/d}.
\end{equation}
Kuz'min \cite{K1991,K1994} considered the case of two prescribed coefficients and gave the formula for $I_q(n,\gamma_1,\gamma_2)$. Yucas and Mullen determined the formula for $I_2(n,\gamma_1,\gamma_2,\gamma_3)$ when $n$ is even \cite{YM2004}, later Yucas and Fitzgerald determined the formula for $I_2(n,\gamma_1,\gamma_2,\gamma_3)$ when $n$ is odd \cite{YF2003}. Also, Yucas \cite{Y2006} gave an alternative proof of Carlit'z formula. Ahmadi et al. in \cite{AGGMY2016} gave the formula for $I_{2^r}(n,0,0)$ for all $r\ge 1$. Most recently, Granger present direct and indirect methods for solving the prescribed traces problem for $q=2$ and $n$ odd. And then in \cite{G2019} he applied these methods for $I_q(n,\gamma_1,\gamma_2,\dots,\gamma_l)$ and $l\ge 7$. Also he obtained explicit formulas for $l=3$ where $q=3$. 
Let $\bar{I}_q(n,\gamma_1,\gamma_2)$ denote the number of monic irreducible polynomials over $\F_q$ of degree $n$ with the coefficients of $x^{n-1}$ and $x$ being the prescribed values $\gamma_1,\gamma_2$, respectively. In this paper we give the formula for $\bar{I}_q(n,0,0)$. Besides, we use this formula to present an upper bound on the number of distinct families with good pseudorandom measures such as family complexity and cross-correlation.



The paper is organized as follows. We present some definitions and previous results in Section~\ref{sec:Pre}. 
In Section \ref{sec:lpol} we present  the concept of $L$-polynomial  of algebraic curves over $F_q$ and its connection to the number of rational points on the algebraic curve. 
In Section \ref{sec:main}, we present our main result and prove the formula on the number of irreducible polynomials with vanishing trace and reciprocal trace. 
In Section \ref{sec:examples} we give  examples and tables for $q=4$ and $q=9$. 
In Section \ref{sec:pseudo} we give a  result on the number of distinct families of pseudorandom sequences with good family complexity and cross-correlation measure.


\section{Preliminaries}\label{sec:Pre}
For $a \in \F_{q^n}$, let the characteristic polynomial of $a$ over $\F_q$ be
\begin{equation}\nn \label{charpol}
\prod_{i=0}^{n-1}(x-a^{q^i})=x^n-a_{n-1}x^{n-1}+\cdots+(-1)^{n-1}a_1x+(-1)^na_0.
\end{equation}
Then we define trace and reciprocal-trace of $a\in \F_{q^n}$ to the base field $\F_q$ as $\Tr(a):=a_{n-1}$ and $\rTr(a):=a_{1}/a_0$, respectively. Hence, we have
\begin{equation} \nn
\Tr(a)=\sum_{i=0}^{n-1}a^{q^i} \mbox{ and }\rTr(a)=\sum_{i=0}^{n-1}a^{-q^i}.
\end{equation}

Let $f(x) =x^n-c_{n-1}x^{n-1}+\cdots+(-1)^{n-1}c_1x+(-1)^n c_0 \in \F_q[x]$ be an irreducible polynomial over $\F_q$. Similarly, we define trace and reciprocal-trace of $f\in \F_{q^n}[x]$ as $\Tr(f):=c_{n-1}$ and $\rTr(f):=c_{1}/c_0$, respectively.

For $\gamma_1,\gamma_2\in\F_q$, let $F_q(n,\gamma_1,\gamma_2)$ be the number of elements $a\in\F_q^n$ for which $\Tr(a)=\gamma_1$ and $\rTr(a)=\gamma_2.$
In this paper we will first consider the values of $F_q(n,\gamma_1,\gamma_2)$ and give its formula for $\gamma_1=0$ and $\gamma_2=0$. Before that, we give some definitions and preliminary results. 
We begin with the definition of Möbius function.
\begin{definition}\cite[Definition 2.1.22]{HBFF2013} The Möbius $\mu$ function is defined on the set of positive integers by
		\begin{displaymath}
	    \mu(m)= \left\{ \begin{array}{ll}
		1 & \textrm{if $m=1$}\\
		(-1)^{k} & \textrm{if $m=m_1m_2\dots m_k$ where the $m_i$ are distinct primes}\\
		0 & \textrm{if $p^2$ divides $m$ for some prime $p$}
		\end{array}\right.
		\end{displaymath} 
\end{definition}
\begin{lemma}\cite[Theorem 2.25]{LN1996}\label{LidlNied} Let $F$ be a finite extension of $K=\F_q$. Then for $a\in F$ we have $\Tr(a)=0$ if and only if $a=y^q-y$ for some $y\in F $.
\end{lemma}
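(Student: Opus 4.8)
The plan is to realize both sides of the equivalence as $\F_q$-linear subspaces of $F$ and then show these two subspaces coincide by proving one inclusion together with an equality of cardinalities. Write $F=\F_{q^n}$, so that $\Tr(a)=\sum_{i=0}^{n-1}a^{q^i}$, and introduce the Artin--Schreier map $\phi\colon F\to F$, $\phi(y)=y^q-y$. Since the Frobenius $y\mapsto y^q$ is additive and fixes $\F_q$ pointwise, $\phi$ is $\F_q$-linear; likewise $\Tr$ is an $\F_q$-linear map from $F$ onto $\F_q$. The assertion of the lemma is exactly that $\operatorname{im}(\phi)=\ker(\Tr)$.

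First I would settle the easy inclusion $\operatorname{im}(\phi)\subseteq\ker(\Tr)$, which is the ``if'' direction. For any $y\in F$ one has $\Tr(y^q)=\sum_{i=0}^{n-1}y^{q^{i+1}}=\sum_{i=1}^{n}y^{q^i}=\Tr(y)$, where the last step uses $y^{q^n}=y$ to shift the index back. Hence $\Tr(\phi(y))=\Tr(y^q)-\Tr(y)=0$, so every element of the form $y^q-y$ has vanishing trace.

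For the reverse inclusion I would argue by counting. The kernel of $\phi$ is $\{y\in F: y^q=y\}=\F_q$, which has $q$ elements, so $|\operatorname{im}(\phi)|=q^n/q=q^{n-1}$. On the other hand, $\ker(\Tr)$ is the zero set of the polynomial $\sum_{i=0}^{n-1}x^{q^i}$, which has degree $q^{n-1}$ and therefore at most $q^{n-1}$ roots in $F$; since $|F|=q^n>q^{n-1}$, the trace is not identically zero, and being $\F_q$-linear into the one-dimensional space $\F_q$ it must be surjective, so $|\ker(\Tr)|=q^n/q=q^{n-1}$. As $\operatorname{im}(\phi)\subseteq\ker(\Tr)$ and both sets have the same finite cardinality $q^{n-1}$, they coincide, which yields the ``only if'' direction and finishes the proof.

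The only delicate point is the surjectivity (equivalently, the nonvanishing) of the trace map invoked in the counting step, and I expect this to be the main obstacle. I would dispose of it via the degree bound above rather than by appealing to Artin's linear independence of characters, since the degree argument is entirely self-contained and keeps the proof within the elementary toolkit already available in this section.
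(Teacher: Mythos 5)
Your proof is correct and complete. Note, however, that the paper does not prove this lemma at all: it is quoted with a citation to Lidl--Niederreiter (Theorem 2.25), so the relevant comparison is with that textbook proof, which takes a different route for the ``only if'' direction. There, one assumes $\Tr(a)=0$, adjoins a root $\beta$ of the Artin--Schreier polynomial $x^q-x-a$ in an extension of $F$, and observes the telescoping identity
\begin{equation*}
0=\Tr(a)=\sum_{i=0}^{n-1}a^{q^i}=\sum_{i=0}^{n-1}\left(\beta^{q^{i+1}}-\beta^{q^i}\right)=\beta^{q^n}-\beta,
\end{equation*}
so that $\beta^{q^n}=\beta$, forcing $\beta\in F$ and exhibiting $a=\beta^q-\beta$ explicitly. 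Your argument instead identifies both sides as $\F_q$-subspaces, proves the easy inclusion $\operatorname{im}(\phi)\subseteq\ker(\Tr)$, and concludes by comparing cardinalities, with the nonvanishing of the trace handled by the degree bound on $\sum_{i=0}^{n-1}x^{q^i}$. Both arguments are elementary and standard; the telescoping proof is shorter and produces the witness $\beta$ directly (at the mild cost of passing through an extension field), while your counting proof stays entirely inside $F$ and makes the structural statement $\operatorname{im}(\phi)=\ker(\Tr)$, including the dimension count $q^{n-1}$ on each side, explicit --- which is in fact the form of the statement most useful elsewhere in this paper (e.g.\ the value $Z(q_1)=q^{n-1}$ used in the proof of Theorem \ref{binary}).
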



We note that for a positive integer $n$ with a positive divisor $d$ and $P$ be a polynomial of degree $n/d$, the following trivially holds \be \label{eq:trace} \Tr(P^d)=d\cdot \Tr(P) \text { and } \rTr(P^d)=d\cdot \rTr(P).\ee

We now present an analog result of \cite[Theorem 1]{AGGMY2016} in the following theorem. Since the proof is not direct, we give it here.
\begin{theorem}\label{numirr}
	Let $n\ge 2$ be an integer. Then 
	\[
	\bar{I_q}(n,0,0)= \frac1n\sum_{d\mid n, p\nmid d} \mu(d)\left(F_q(n/d,0,0)-[p \text{ divides }n]q^{n/pd}\right).
	\]
\end{theorem}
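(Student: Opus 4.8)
The plan is to count the elements of $\F_{q^n}$ enumerated by $F_q(n,0,0)$ according to their degree over $\F_q$ (equivalently, by their minimal polynomial), and then to Möbius-invert the resulting identity to isolate the degree-$n$ irreducibles with vanishing trace and reciprocal trace.

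First I would record the bridge between elements and polynomials. If $a\in\F_{q^n}$ has degree $e$ over $\F_q$ with minimal polynomial $P$ (so $e\mid n$ and $\deg P=e$), then its characteristic polynomial over $\F_q$ is $P^{\,n/e}$, and $a$ is one of exactly $e$ elements of $\F_{q^n}$ sharing this minimal polynomial. By the definition of $\Tr$ and $\rTr$ as coefficients of the characteristic polynomial together with \eqref{eq:trace}, this gives $\Tr(a)=\tfrac ne\Tr(P)$ and $\rTr(a)=\tfrac ne\rTr(P)$. Grouping the $a\in\F_{q^n}$ by their minimal polynomial and writing $d=n/e$ then yields the master identity
\[
F_q(n,0,0)=\sum_{d\mid n}\frac nd\,\#\{P\ \text{irreducible of degree }n/d:\ d\,\Tr(P)=0,\ d\,\rTr(P)=0\}.
\]
The purpose of the factor $d$ is that, in characteristic $p$, the condition $d\,\Tr(P)=0$ is equivalent to $\Tr(P)=0$ exactly when $p\nmid d$, whereas it holds automatically when $p\mid d$; the same dichotomy applies to $d\,\rTr(P)=0$. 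Hence the inner count equals $\bar{I}_q(n/d,0,0)$ when $p\nmid d$ and equals $I_q(n/d)$ (all monic irreducibles of that degree) when $p\mid d$, so that
\[
F_q(n,0,0)=\sum_{\substack{d\mid n\\ p\nmid d}}\frac nd\,\bar{I}_q(n/d,0,0)+\sum_{\substack{d\mid n\\ p\mid d}}\frac nd\,I_q(n/d).
\]

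Next I would evaluate the second sum in closed form. Since $\tfrac nd\,I_q(n/d)$ is precisely the number of elements of $\F_{q^n}$ of degree $n/d$ over $\F_q$, the sum over divisors $d$ with $p\mid d$ counts those $a\in\F_{q^n}$ whose degree $e=n/d$ satisfies $p\mid(n/e)$, i.e.\ $e\mid n/p$; these are exactly the elements of the subfield $\F_{q^{n/p}}$. Thus the second sum equals $[p\mid n]\,q^{n/p}$ (and is empty when $p\nmid n$). Setting $g(m):=m\,\bar{I}_q(m,0,0)$ and $f(m):=F_q(m,0,0)-[p\mid m]\,q^{m/p}$, the identity above reads, for every $m$, as the restricted convolution $f(m)=\sum_{d\mid m,\ p\nmid d}g(m/d)$.

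Finally I would invoke a restricted Möbius inversion over $p$-free divisors: if $f(m)=\sum_{d\mid m,\,p\nmid d}g(m/d)$ for all $m$, then $g(m)=\sum_{d\mid m,\,p\nmid d}\mu(d)\,f(m/d)$. This follows from $\sum_{d\mid k}\mu(d)=[k=1]$: substituting and grouping the divisors as products $de$ of $p$-free divisors collapses the double sum, since $\sum_{d\mid k,\,p\nmid d}\mu(d)=\sum_{d\mid k}\mu(d)$ whenever $p\nmid k$. Applying this to $g(n)=n\,\bar{I}_q(n,0,0)$ gives
\[
n\,\bar{I}_q(n,0,0)=\sum_{\substack{d\mid n\\ p\nmid d}}\mu(d)\Bigl(F_q(n/d,0,0)-[p\mid n/d]\,q^{n/(pd)}\Bigr),
\]
and since $p\nmid d$ forces $[p\mid n/d]=[p\mid n]$, dividing by $n$ yields the claimed formula. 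The main obstacle is the middle step: the naive grouping does not produce a clean Dirichlet convolution, because the trace conditions degenerate precisely on the divisors $d$ with $p\mid d$. The crux is to isolate this degeneracy as the single correction term $[p\mid n]\,q^{n/p}$ (a subfield count), so that what remains is a genuine $p$-restricted convolution amenable to inversion. I would also double-check the degenerate low-degree terms (e.g.\ $e=1$, where $\rTr$ requires nonzero elements) to confirm they are consistent with the convention adopted for $F_q(\cdot,0,0)$.
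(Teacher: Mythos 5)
Your proof is correct and follows essentially the same route as the paper's: group the elements counted by $F_q(n,0,0)$ by minimal polynomial, use $\Tr(P^d)=d\cdot\Tr(P)$ and $\rTr(P^d)=d\cdot\rTr(P)$ to split the divisors into $p\mid d$ (which collapses to the subfield count $[p\mid n]\,q^{n/p}$) and $p\nmid d$, then invert. The only difference is presentational: you spell out the $p$-restricted M\"obius inversion and the observation $[p\mid n/d]=[p\mid n]$ for $p\nmid d$, which the paper compresses into a single ``Therefore.''
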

\begin{proof}
	We have 	\begin{align*}
	\hspace{-0.8cm}F_q(n,0,0)&=\left|\bigcup_{\beta \in \mathbb F_{q^n}, \: \Tr(\beta)=0, \rTr(\beta)=0} \text{Min}(\beta)\right|\\
	&=\left|\bigcup_{d\mid n}\frac nd \left\{P\in \text{Irr}\left(\frac nd\right) : d\cdot \Tr(P)=0,\: d\cdot \rTr(P)=0 \right\}\right|\\
	&=[p \text{ divides } n]\left|\bigcup_{d\mid n, \; p \mid d}\frac nd \left\{P\in \text{Irr}\left(\frac nd\right) \right\}\right| \\
	&\qquad +\left|\bigcup_{d\mid n, \: p \nmid d}\frac nd \left\{P\in \text{Irr}\left(\frac nd\right) : \Tr(P)=0,\: \rTr(P)=0 \right\}\right|\\
	&=[p \text{ divides } n]\sum\limits_{d\mid n, \: p \mid d}\frac nd \bar{I_q}\left(\frac nd\right)
	+\sum\limits_{d\mid n, \: p\nmid d}\frac nd \bar{I_q}\left(\frac nd,0,0\right)\\&=[p \text{ divides } n]q^{n/p}+\sum\limits_{d\mid n, \: p\nmid d}\frac nd \bar{I_q}\left(\frac nd,0,0\right),
	\end{align*}	where the third equality follows from \eqref{eq:trace}. Therefore, $$\bar{I_q}(n,0,0)=\frac1n\sum\limits_{d\mid n, \: p\nmid d}\left(F_q(n/d,0,0)-[p \text{ divides } n] q^{n/pd}\right).$$
\end{proof}
\section{L-Polynomial}\label{sec:lpol}
In this chapter we define the $L$-Polynomial of curves over a finite field. Also we give a well-known formula for the number of rational points on algebraic curves over the finite fields. 
\begin{definition} Let $q=p^r$ where $p$ is a prime number. Let $C=C(\F_q)$ be a (projective, smooth, absolutely irreducible) algebraic curve of genus g defined over $\F_q$. Consider the $L$-polynomial of the curve $C$ over $\F_q$ defined by 
$$L_{C}(t)=\exp{\bigg(\sum_{n=1}^\infty(\#C(\F_{q^n})-q^n-1)\frac{t^n}{n}\bigg)}$$
where $\#C(\F_{q^n})$ denotes the number of $\F_{q^n}$-rational points of $C$.       
\end{definition} Also $L_{C}(t)$ is defined as follows:
$$L_{C}(t)=\sum_{i=0}^{2g}c_it^i$$ where $c_i\in\Z$ and $g$ is the genus of $C$. For instance, for genus $1$, the $L$-polynomial given by $L_{C}(t)=qt^2+c_1t+1$, where $c_1=\#C(F_q)-(q+1)$. In general, the coefficients of the $L$-polynomial are determined by $\#C(\F_{q^n})$ for $n=1,2,\dots,g$. Let $\alpha_1,\dots,\alpha_{2g}$ be the roots of the reciprocal of the $L$-polynomial of $C$ over $\F_q$. Then
$$L_{C}(t)=\prod_{i=1}^{2g}(1-\alpha_{i}^nt).$$ We also have that 
\begin{equation}\label{ratpoints}\#C(\F_{q^n})=(q^n+1)-\sum_{i=1}^{2g}(\alpha_{i})^n\end{equation} for all $n\ge1$, where $|\alpha_i|=\sqrt{q}$. 
\section{Finding the values $F_q(n,0,0)$}
\label{sec:main}
In this section we will find the numbers $F_q(n,0,0)$ where $q$ is an even prime power and $n$ is a positive integer. We relate these numbers with $q-1$ elliptic curves which are related with trace. Since calculating the number of $\mathbb F_q$-rational points of an elliptic curve is enough to find all the number of $\mathbb F_{q^n}$-rational points, the given formula for $F_q(n,0,0)$ is fast to compute. Since these curves are related with trace, we can prefer to write an algorithm using the trace forms.

We note that exact method can be applied for $F_q(n,t_1,t_2)$ where $q$ is an any prime power and $t_1,t_2 \in \mathbb F_q$.

Let $q$ be an even prime power and $n$ be a positive integer. For functions $q_1,\ q_2 : \mathbb F_{q^n} \to \mathbb F_q$ define related $N(t_1,t_2)$ be the number of elements in $\mathbb F_{q^n}$ satisfying $q_1(x)=t_1$ and $q_2(x)=t_2$. For a function   $f : \mathbb F_{q^n} \to \mathbb F_q$ define $Z(f)$  be the number of elements in $\mathbb F_{q^n}$ satisfying $f(x)=0$.

\begin{lemma}\cite[Lemma 6]{AGGMY2016}\label{gen-q12}
Let $q_1,\ q_2 : \mathbb F_{q^n} \to \mathbb F_q$  be any functions. Then 
\[
N(0,0)=\frac1q\left(Z(q_1)+\sum_{\alpha\in \mathbb F_q}Z(\alpha q_1-q_2)-q^n\right).
\] 
\end{lemma}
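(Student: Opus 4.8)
The plan is to establish the identity by a double-counting argument applied to the set of pairs
\[
S = \{(x,\alpha) \in \mathbb{F}_{q^n} \times \mathbb{F}_q : \alpha\, q_1(x) = q_2(x)\}.
\]
I would compute $|S|$ in two ways and then solve the resulting equation for $N(0,0)$.

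First I would count $|S|$ by fixing $\alpha$ and summing over the resulting slices. For each $\alpha \in \mathbb{F}_q$, the number of $x$ with $\alpha q_1(x) - q_2(x) = 0$ is by definition exactly $Z(\alpha q_1 - q_2)$, so summing over all $\alpha$ gives $|S| = \sum_{\alpha \in \mathbb{F}_q} Z(\alpha q_1 - q_2)$.

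Next I would recount $|S|$ by fixing $x$ and determining how many $\alpha$ satisfy $\alpha q_1(x) = q_2(x)$. The key observation is that the dependence on $\alpha$ degenerates precisely when $q_1(x)=0$: if $q_1(x) \neq 0$ there is a unique solution $\alpha = q_2(x)/q_1(x)$, contributing $1$; if $q_1(x) = 0$ and $q_2(x) = 0$ the equation holds for every $\alpha$, contributing $q$; and if $q_1(x) = 0$ but $q_2(x) \neq 0$ there is no solution, contributing $0$. The first group consists of the $q^n - Z(q_1)$ elements with $q_1(x)\neq 0$, the second group is exactly the set of $x$ enumerated by $N(0,0)$, and the third contributes nothing. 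Hence $|S| = \bigl(q^n - Z(q_1)\bigr) + q\,N(0,0)$.

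Equating the two expressions for $|S|$ and rearranging isolates $q\,N(0,0) = Z(q_1) + \sum_{\alpha \in \mathbb{F}_q} Z(\alpha q_1 - q_2) - q^n$, which is the claimed formula after dividing by $q$. The argument is entirely elementary and uses no structure of $q_1,q_2$ beyond field arithmetic; the only point requiring care is the case split when $q_1(x)=0$, where the equation $\alpha q_1(x)=q_2(x)$ becomes independent of $\alpha$ and therefore contributes either all $q$ values or none, according to whether $q_2(x)$ also vanishes. This degenerate case is exactly where both the correction term $Z(q_1)$ and the $N(0,0)$ contribution enter, so it is the crux of the bookkeeping rather than any analytic difficulty.
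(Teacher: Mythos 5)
Your proof is correct, and while both arguments are double counts at heart, you organize the count along a genuinely different route than the paper. The paper never introduces your incidence set $S$; it starts instead from $q^n=\sum_{\alpha,\beta\in\mathbb F_q}N(\alpha,\beta)$, splits off the $\alpha=0$ terms (which sum to $Z(q_1)$), and for $\alpha\neq0$ applies the scaling bijection $\beta\mapsto\alpha\beta$ to rewrite $\sum_{\beta}N(\alpha,\beta)$ as $\sum_{\beta}N(\alpha,\alpha\beta)$; after re-indexing and adding back the diagonal contribution $qN(0,0)$, it recognizes $\sum_{\beta}N(\beta,\alpha\beta)$ as exactly $Z(\alpha q_1-q_2)$, since that sum counts the $x$ with $q_2(x)=\alpha q_1(x)$. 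Your fiber-wise case analysis over $x$ makes explicit what that substitution encodes implicitly: when $q_1(x)\neq0$ there is a unique $\alpha$ with $\alpha q_1(x)=q_2(x)$, and when $q_1(x)=0$ the fiber degenerates to $q$ solutions or none according to whether $q_2(x)=0$, which is precisely where $Z(q_1)$ and $qN(0,0)$ enter. What the paper's route buys is a compact chain of equalities phrased entirely in the value-distribution numbers $N(\alpha,\beta)$, which matches the style of the surrounding lemmas; what yours buys is transparency --- the provenance of each of the three terms $Z(q_1)$, $\sum_{\alpha}Z(\alpha q_1-q_2)$ and $q^n$ is visible at a glance, and the degenerate case is isolated rather than absorbed into a re-indexing step. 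Both arguments are elementary, use no structure of $q_1,q_2$, and are equally general.
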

\begin{proof}It follows by the following equalities.
	\begin{align*}
	q^n=\sum_{\alpha,\beta \in \mathbb F_q}N(\alpha,\beta)&=\sum_{\beta\in \mathbb F_q}N(0,\beta)+\sum_{\beta\in \mathbb F_q}\sum_{\alpha\in \mathbb F_q^\times}N(\alpha,\beta)\\
	&=Z(q_1)+\sum_{\beta\in \mathbb F_q}\sum_{\alpha\in \mathbb F_q^\times}N(\alpha,\alpha\beta)\\
	&=Z(q_1)+\sum_{\alpha,\beta\in \mathbb F_q}N(\beta,\alpha\beta)-qN(0,0)\\
	&=Z(q_1)+\sum_{\alpha\in\mathbb F_q}Z(\alpha q_1-q_2)-qN(0,0).
	\end{align*}
\end{proof}

\begin{lemma}\label{CZ}
	Let $q_1(x)=\Tr(x)$ and $q_2(x)=\rTr(x)$ be functions from $\mathbb F_{q^n}$ to $\mathbb F_{q}$.  The number of $\mathbb F_{q^n}$-rational points of $x(y^q-y)=\alpha x^2-1$ equals to $qZ(\alpha q_1-q_2)-q+2$.
\end{lemma}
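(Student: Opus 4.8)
The plan is to count the $\F_{q^n}$-rational points of $C:\ x(y^q-y)=\alpha x^2-1$ by fibering over the coordinate $x$, separating the affine chart from the points at infinity, and then matching the affine contribution to $Z(\alpha q_1-q_2)$.

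First I would handle the affine points. If $x=0$ the equation reads $0=-1$, which has no solution, so $x=0$ contributes nothing. For $x\neq0$ I would divide by $x$ and put the equation in Artin--Schreier form
\[
y^q-y=\alpha x-x^{-1}.
\]
The map $y\mapsto y^q-y$ is $\F_q$-linear on $\F_{q^n}$ with kernel $\{y:y^q=y\}=\F_q$, hence is exactly $q$-to-one onto its image; by Lemma~\ref{LidlNied} that image is precisely the set of elements of trace zero. Consequently, for each fixed $x\neq0$ the number of $y\in\F_{q^n}$ solving the displayed equation is $q$ if $\Tr(\alpha x-x^{-1})=0$ and $0$ otherwise.

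Next I would identify this trace condition with the vanishing of $\alpha q_1-q_2$. Using $\F_q$-linearity of the trace together with $\alpha\in\F_q$ gives $\Tr(\alpha x)=\alpha\Tr(x)=\alpha q_1(x)$, while by definition $\Tr(x^{-1})=\rTr(x)=q_2(x)$; hence $\Tr(\alpha x-x^{-1})=(\alpha q_1-q_2)(x)$. Therefore the affine points with $x\neq0$ number
\[
q\cdot\#\{x\in\F_{q^n}^\times:(\alpha q_1-q_2)(x)=0\}.
\]
Since $Z(\alpha q_1-q_2)$ counts solutions in all of $\F_{q^n}$ and $x=0$ trivially satisfies $(\alpha q_1-q_2)(0)=0$, this punctured count equals $Z(\alpha q_1-q_2)-1$, so the affine contribution is $q\bigl(Z(\alpha q_1-q_2)-1\bigr)=qZ(\alpha q_1-q_2)-q$.

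Finally I would account for the behavior at infinity. Homogenizing the degree-$(q+1)$ equation as $xy^q-xyz^{q-1}-\alpha x^2z^{q-1}+z^{q+1}=0$ and setting $z=0$ leaves $xy^q=0$, whose only projective solutions are $[0:1:0]$ and $[1:0:0]$; this gives exactly two points at infinity. Adding them to the affine count yields $qZ(\alpha q_1-q_2)-q+2$, as claimed. I expect the main obstacle to be the bookkeeping at the boundary rather than the bulk count: one must check that $x=0$ is counted by $Z$ yet yields no affine point (this is the source of the $-q$), and that the locus at infinity contributes exactly the two points above (the $+2$), with no further $\F_{q^n}$-points hidden at $z=0$.
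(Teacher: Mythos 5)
Your proof is correct and follows essentially the same route as the paper's: split off the two points at infinity $[1:0:0]$, $[0:1:0]$ from the homogenized equation, note $x=0$ gives no affine solution, and for $x\neq0$ use the Artin--Schreier form $y^q-y=\alpha x-x^{-1}$ together with Lemma~\ref{LidlNied} so that each zero of $\Tr(\alpha x - x^{-1})=(\alpha q_1-q_2)(x)$ contributes exactly $q$ values of $y$ (a coset of $\F_q$), yielding $2+q\bigl(Z(\alpha q_1-q_2)-1\bigr)$. If anything, you are more explicit than the paper on the two bookkeeping points it glosses over --- that the fibers are cosets of the kernel $\F_q$ of $y\mapsto y^q-y$ (the paper misprints this as $c\in\F_{q^n}$), and that the $-1$ in $Z(\alpha q_1-q_2)-1$ comes from the convention that $x=0$ is counted among the zeros of $\alpha q_1-q_2$.
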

\begin{proof}
	The projective curve $xy^q-xyz^{q-1}=\alpha x^2z^{q-1}-z^{q+1}$ has two infinity points $(1:0:0)$ and $(0:1:0)$ and has no extra solution when $x=0$. If $x\ne 0$, then the points on $x(y^q-y)=\alpha x^2-1$ are related with the the set of zeros of $\Tr(\alpha x-x^{-1})$. If $x$ is a such zero, then there exists $y\in \mathbb F_{q^n}$ such that all the points $(x,y+c)$ are on the curve where $c\in \mathbb F_{q^n}$. Therefore,  $\mathbb F_{q^n}$-rational points of $x(y^q-y)=\alpha x^2-1$ equals to \[2+q(Z(\alpha q_1-q_2)-1)=qZ(\alpha q_1-q_2)-q+2.\]
\end{proof}
\begin{lemma}\label{CaC}
	Assume that $q$ is an even prime power. Let $\alpha \in \mathbb F_{q}^\times$. The number of $\mathbb F_{q^n}$-rational points of the curves $x(y^q+y)=\alpha x^2+1$ and $x(y^q+y)= x^2+1$ over $\mathbb F_q$ are same.
\end{lemma}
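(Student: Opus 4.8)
The plan is to exhibit an explicit $\F_q$-linear change of variables that carries one curve onto the other, so that the two curves are isomorphic over $\F_q$ and hence have equal numbers of $\F_{q^n}$-rational points for every $n$. Since $q$ is even, the equation $x(y^q-y)=\alpha x^2-1$ of Lemma~\ref{CZ} coincides with $x(y^q+y)=\alpha x^2+1$, so it suffices to relate the curves $C_\alpha: x(y^q+y)=\alpha x^2+1$ and $C_1: x(y^q+y)=x^2+1$.

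First I would look for a substitution of the diagonal form $x=au$, $y=bv$ with $a,b\in\F_q^\times$, because requiring $b\in\F_q$ keeps the additive structure intact: as $b^q=b$ we get $y^q+y=b(v^q+v)$. Plugging in and clearing the common factor $ab$ turns $C_\alpha$ into $u(v^q+v)=(\alpha a/b)u^2+(ab)^{-1}$. Matching this with $C_1$ forces the two conditions $\alpha a=b$ and $ab=1$, which are simultaneously solvable precisely when $a^2=\alpha^{-1}$, with $b=\alpha a$.

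The one nontrivial point, and the place where the hypothesis that $q$ is even is essential, is producing such an $a$ inside $\F_q$. In characteristic two the Frobenius map $t\mapsto t^2$ is an automorphism of $\F_q$, so every element of $\F_q$, in particular $\alpha^{-1}$, has a unique square root in $\F_q$; I would take $a$ to be this root and set $b=\alpha a$, noting that both $a,b\in\F_q^\times$.

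Finally I would observe that, since $a$ and $b$ lie in $\F_q^\times$, the map $(u,v)\mapsto(au,bv)$ is an invertible $\F_q$-linear transformation of the affine plane and therefore induces a bijection between the $\F_{q^n}$-rational affine points of $C_1$ and those of $C_\alpha$. Because a diagonal linear substitution preserves the coordinate axes, it extends to the projective closures and sends the two points at infinity $(1:0:0)$ and $(0:1:0)$ identified in Lemma~\ref{CZ} to the corresponding points on the other curve, so the total $\F_{q^n}$-point counts agree. I expect the only real subtlety to be the square-root step; the verification of the substitution and the handling of the points at infinity are routine.
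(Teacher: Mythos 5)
Your proposal is correct and is essentially the paper's own proof: both reduce to the diagonal substitution $(x,y)\mapsto(ax,a^{-1}y)$ with $a\in\F_q^\times$ satisfying $a^2=\alpha^{-1}$, which identifies the two curves over $\F_q$. The only immaterial difference is how that square root is produced --- the paper takes $a=\alpha^n$ where $2n+1$ is the (necessarily odd) multiplicative order of $\alpha$, while you invoke bijectivity of the squaring Frobenius in characteristic two; your explicit check of the points at infinity is a detail the paper leaves implicit.
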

\begin{proof}
Since order of $\alpha$ is odd, there exist $n$ such that $2n+1$ is the order of $\alpha$. The transformation  $(x,y)\to (\alpha^n x,\alpha^{-n}y)$ on $x(y^q+y)=\alpha x^2+1$ gives $x(y^q+y)= x^2+1$.
\end{proof}
The following lemma follows by Lemma $8$ in \cite{AGGMY2016}. 
\begin{lemma}\label{CCa} 
	Assume that $q$ is an even prime power.  The curve $C:x(y^q+y)= x^2+1$ over $\mathbb F_q$ is the fiber product of the curves $C_{\alpha}:x(y^2+y)= \alpha(x^2+1)$ over $\mathbb F_q$  where $\alpha \in \mathbb F_q^\times$. Therefore, 
	\[
	\#C(\mathbb F_{q^n})-(q^n+1)=\sum_{\alpha\in \mathbb F_q^\times}\left(\#C_\alpha(\mathbb F_{q^n})-(q^n+1)\right). 
	\]
\end{lemma}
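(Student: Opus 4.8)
The plan is to recognise every curve in sight as an Artin--Schreier cover of the projective $x$-line and to identify $C$ with the compositum (\ie the normalised fiber product over $\P^1_x$) of the $C_\alpha$, after which the additivity of the point-count defect is exactly the additivity furnished by Lemma~8 of \cite{AGGMY2016}. First I would pass to the chart $x\ne0$ and divide by $x$, rewriting $C$ as $y^q+y=x+x^{-1}$ and each $C_\alpha$ as $y^2+y=\alpha(x+x^{-1})$; neither $C$ nor any $C_\alpha$ has an affine point with $x=0$, so nothing is lost there. Writing $c(x):=x+x^{-1}$, the projection $(x,y)\mapsto x$ exhibits $C\to\P^1_x$ as the additive Artin--Schreier cover attached to the $\F_q$-linear polynomial $\wp_q(y):=y^q+y$, whose set of roots is $\F_q$; hence this cover is Galois with group $(\F_q,+)\cong(\Z/2)^r$. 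Likewise each $C_\alpha\to\P^1_x$ is a $\Z/2$-cover attached to $y^2+y$.

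The key algebraic input is the interplay of the two traces. By Lemma~\ref{LidlNied} the fibre of $\wp_q$ over $c(x)$ is nonempty exactly when $\Tr_{\F_{q^n}/\F_q}(c(x))=0$, while $y^2+y=\alpha c(x)$ is solvable exactly when $\Tr_{\F_{q^n}/\F_2}(\alpha c(x))=0$. Transitivity of the trace gives $\Tr_{\F_{q^n}/\F_2}(\alpha c)=\Tr_{\F_q/\F_2}\!\big(\alpha\,\Tr_{\F_{q^n}/\F_q}(c)\big)$ for $\alpha\in\F_q$, and the nondegeneracy of the form $(\alpha,t)\mapsto\Tr_{\F_q/\F_2}(\alpha t)$ on $\F_q$ shows that $\Tr_{\F_{q^n}/\F_q}(c)=0$ iff $\Tr_{\F_{q^n}/\F_2}(\alpha c)=0$ for every $\alpha\in\F_q^\times$. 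On the level of covers this says precisely that the $q-1$ intermediate $\Z/2$-covers $C_\alpha$, indexed by the nonzero $\F_2$-functionals $\Tr_{\F_q/\F_2}(\alpha\,\cdot)$, are pairwise distinct and that their compositum has degree $2^r=q$ over $\P^1_x$, matching the degree of $C$. Confirming that each $C_\alpha$ is genuinely a subcover of $C$ (the additive Artin--Schreier descent expressing an Artin--Schreier generator $z_\alpha$, with $z_\alpha^2+z_\alpha=\alpha c$, as an $\F_{q^n}$-additive polynomial in $y$) then forces $C$ to equal the normalised fiber product of the $C_\alpha$, which is the first assertion of the lemma.

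With the fiber-product description in hand I would invoke Lemma~8 of \cite{AGGMY2016}: for an elementary abelian $2$-cover the regular representation of $(\Z/2)^r$ splits as the trivial character (accounting for the genus-$0$ base $\P^1$) together with the $q-1$ nontrivial characters, one for each $C_\alpha$, so the Frobenius traces --- equivalently the defects $\#(\,\cdot\,)(\F_{q^n})-(q^n+1)=-\sum_i\alpha_i^n$ read off from \eqref{ratpoints} --- add over the isotypic pieces, giving the displayed identity. As a check, and as an independent route should the citation need buttressing, the same relation follows by counting affine points with $x\ne0$ through additive characters: expanding $[\Tr_{\F_{q^n}/\F_q}(c(x))=0]=\tfrac1q\sum_{\alpha\in\F_q}(-1)^{\Tr_{\F_{q^n}/\F_2}(\alpha c(x))}$ and comparing with the fibre sizes of the $C_\alpha$ reproduces the formula term by term.

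The step I expect to be delicate is the passage from affine to smooth projective counts: the naive fiber product of the $C_\alpha$ is typically singular and reducible, so I must work with normalisations and verify that the contributions at $x=0,\infty$ (the two points at infinity already visible in the proof of Lemma~\ref{CZ}) and at the ramification of $\wp_q$ enter identically on both sides, so that the defect relation holds for the complete nonsingular models rather than merely on $x\ne0$. Equivalently, the trivial-character (genus-$0$) part must be correctly isolated so that exactly $q^n+1$ is subtracted on each side; this boundary bookkeeping, together with the explicit construction of the subcover maps $z_\alpha$, rather than the character algebra, is where the real care is needed.
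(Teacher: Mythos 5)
Your proposal is correct, but it is worth noting that the paper itself gives no argument at all for this lemma: its ``proof'' is the single sentence that the statement follows from Lemma~8 of \cite{AGGMY2016}. What you have written is essentially a reconstruction of the content hiding behind that citation, together with an independent verification, so you have done strictly more than the paper. Your identification of $C$ with the compositum of the $C_\alpha$ is sound: the cover $C\to\P^1_x$ given by $y^q+y=x+x^{-1}$ is Galois with group $(\F_q,+)$, and the explicit subcover maps you allude to exist concretely, namely $z_\alpha=\sum_{i=0}^{r-1}(\alpha y)^{2^i}$ satisfies $z_\alpha^2+z_\alpha=\alpha(y^q+y)$ and is fixed exactly by the index-two subgroup $\ker\big(a\mapsto\Tr_{\F_q/\F_2}(\alpha a)\big)$; nondegeneracy of the trace form then shows the $q-1$ subcovers are distinct and their compositum is all of $C$, which is the precise meaning of ``fiber product'' here. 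Your character-sum route also closes the one gap you flag yourself: writing $S_\alpha=\sum_{x\ne0}(-1)^{\Tr_{\F_{q^n}/\F_2}(\alpha c(x))}$, the affine counts are $qZ_F'=(q^n-1)+\sum_{\alpha\ne0}S_\alpha$ for $C$ and $2Z_\alpha'=(q^n-1)+S_\alpha$ for $C_\alpha$, and since $c(x)=x+x^{-1}$ has simple poles at $x=0$ and $x=\infty$, both places are totally ramified in every cover, so each smooth model has exactly one rational point over each of them in every $\F_{q^n}$; the defect of $C$ is then $\sum_{\alpha\ne0}S_\alpha$ and the defect of each $C_\alpha$ is $S_\alpha$, giving the identity exactly, not just up to boundary terms. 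In short: the paper's approach buys brevity by outsourcing everything to \cite{AGGMY2016}, while yours buys a self-contained proof that additionally settles the passage from plane models to smooth projective counts, which neither the paper nor its one-line citation addresses within this text.
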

The following lemma follows by an analogue of Lemma $8$ in \cite{AGGMY2016} to all primes. 
\begin{lemma}\label{CCap}
Assume that $q$ is prime $p$-power. Let $\alpha \in \mathbb F_{q}^\times$. The curve $C_\alpha:x(y^q-y)= \alpha x^2-1$ over $\mathbb F_q$ is the fiber product of the curves $C_{\alpha,\beta}:x(y^p-y)= \beta(\alpha x^2-1)$ over $\mathbb F_q$  where $\beta\in \mathbb F_q^\times/\mathbb F_p^\times$ as a representative set  in $F_q^\times$. Therefore, 
	\[
	\#C_\alpha(\mathbb F_{q^n})-(q^n+1)=\sum_{\beta\in \mathbb F_q^\times/\mathbb F_p^\times}\left(\#C_{\alpha,\beta}(\mathbb F_{q^n})-(q^n+1)\right). 
	\]
\end{lemma}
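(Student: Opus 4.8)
The plan is to prove Lemma~\ref{CCap} by exhibiting $C_\alpha$ explicitly as a fiber product of the curves $C_{\alpha,\beta}$ over the base line, and then to read off the point-count identity from the general behavior of fiber products of Artin--Schreier covers. The starting observation is that the defining equation of $C_\alpha$ is $x(y^q-y)=\alpha x^2-1$, and the additive polynomial $y\mapsto y^q-y$ factors through the $\F_p$-linear tower: since $q=p^r$, we have $y^q-y=\sum_{i=0}^{r-1}(y^{p}-y)^{p^i}$ up to the usual $\F_p$-linear identification, so the condition that $\Tr_{\F_q/\F_p}$ applied to $(\alpha x - x^{-1})$ vanishes decomposes the single equation over $\F_q$ into a system of $\F_p$-Artin--Schreier conditions. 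Concretely I would fix an $\F_p$-basis adapted to the trace form and rewrite $y^q-y$ in terms of $y^p-y$, so that the curve $C_\alpha$ becomes the simultaneous vanishing locus governed by the family $\{C_{\alpha,\beta}\}$ as $\beta$ ranges over a set of representatives of $\F_q^\times/\F_p^\times$.

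The key structural step is to recognize that this is precisely the fiber-product setup already used in Lemma~$8$ of \cite{AGGMY2016} for the even-characteristic case, but now carried out over an arbitrary prime $p$. I would therefore follow that argument verbatim with $p$ in place of $2$: each $C_{\alpha,\beta}$ is the Artin--Schreier cover of the affine line (in the $x$-coordinate, after clearing the $x$ in the denominator) cut out by $y^p-y=\beta(\alpha x^2-1)/x$, and the fiber product $C_{\alpha,\beta_1}\times\cdots\times C_{\alpha,\beta_m}$ over the common base recovers a cover whose Artin--Schreier class is the sum of the individual classes. Because the classes $\{\beta\cdot(\alpha x^2-1)/x\}_\beta$ span exactly the $\F_q/\F_p$-trace-zero relation defining $y^q-y$, the total fiber product is isomorphic to $C_\alpha$ away from the finitely many ramified/degenerate fibers.

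From the fiber-product description the point-count formula follows by the standard additivity of the ``defect'' $\#C(\F_{q^n})-(q^n+1)$ under fiber products of Artin--Schreier covers: the number of rational points of a fiber product over the line equals the base count plus the sum over components of their individual defects, provided the branch behavior at $x=0$ and at infinity matches up. I would verify this by summing the pointwise contributions fiber by fiber over $x\in\F_{q^n}$, using that for each $x$ the number of $y$ solving $y^q-y=\alpha x - x^{-1}$ equals the product over $\beta$ of the number solving the corresponding degree-$p$ equation, which is exactly the multiplicativity underlying the fiber product. The main obstacle I anticipate is the careful bookkeeping at the exceptional locus, namely the points where $x=0$ or where the curves meet at infinity: one must check that the two infinity points and the $x=0$ behavior of $C_\alpha$ are correctly distributed among the fibers so that the correction terms $(q^n+1)$ cancel cleanly in the summation. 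This is the same delicate matching handled for $p=2$ in \cite{AGGMY2016}, and the work is to confirm that replacing $2$ by a general prime $p$ does not disturb the ramification structure at these special fibers.
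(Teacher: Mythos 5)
Your high-level plan --- realize the $C_{\alpha,\beta}$ as the degree-$p$ Artin--Schreier constituents of $C_\alpha$ and carry over Lemma~8 of \cite{AGGMY2016} from $p=2$ to general $p$ --- is the same as the paper's, which in fact offers no proof at all beyond exactly that citation. However, your execution contains a step that is genuinely false. You claim that for each fixed $x$, writing $c=(\alpha x^2-1)/x$, the number of $y\in\F_{q^n}$ with $y^q-y=c$ equals the \emph{product} over the representatives $\beta$ of the number of $w\in\F_{q^n}$ with $w^p-w=\beta c$. For $q=p^r$ with $r\ge 2$ this cannot hold: the left side is $q$ or $0$ (it is $q$ precisely when $\Tr_{\F_{q^n}/\F_q}(c)=0$), while each factor on the right is $p$ or $0$ and there are $(q-1)/(p-1)=1+p+\cdots+p^{r-1}>r$ representatives, so the product is $p^{(q-1)/(p-1)}>q$ or $0$. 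Geometrically, the literal fiber product of all the $C_{\alpha,\beta}$ over the $x$-line has degree $p^{(q-1)/(p-1)}$ and is \emph{reducible}; $C_\alpha$ sits inside it as just one irreducible component, embedded by $(x,y)\mapsto\bigl(x,(w_\beta)_\beta\bigr)$ with $w_\beta=\sum_{i=0}^{r-1}(\beta y)^{p^i}$, which satisfies $w_\beta^p-w_\beta=\beta(y^q-y)$. (Your related assertion that the fiber product has Artin--Schreier class equal to the \emph{sum} of the individual classes is wrong for the same reason: the compositum corresponds to the span of the classes, not their sum.) So the defect identity cannot be extracted from fiberwise multiplicativity, and this is the heart of the proof, not the bookkeeping at $x=0$ and infinity that you flag as the main obstacle.

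The mechanism that actually works is \emph{additive}, fiber by fiber: for every $c$, one has $\#\{y:y^q-y=c\}-1=\sum_{\beta}\bigl(\#\{w:w^p-w=\beta c\}-1\bigr)$, the sum over representatives of $\F_q^\times/\F_p^\times$. Indeed, if $\Tr_{\F_{q^n}/\F_q}(c)=0$ then every $\Tr_{\F_{q^n}/\F_p}(\beta c)$ vanishes and both sides equal $q-1$; if $t:=\Tr_{\F_{q^n}/\F_q}(c)\ne 0$, then by transitivity $\Tr_{\F_{q^n}/\F_p}(\beta c)=\Tr_{\F_q/\F_p}(\beta t)$, and the $\beta\in\F_q$ annihilated by this form an $\F_p$-hyperplane of $\F_q$, so exactly $(q/p-1)/(p-1)$ representatives contribute $p-1$ and the remaining $q/p\cdot(p-1)^{-1}(p-1)=q/p$ contribute $-1$, giving $(q/p-1)-q/p=-1$ on both sides. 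Summing over $x\in\F_{q^n}^\times$ and adding the two points at infinity of each curve (as in Lemma~\ref{CZ}, there are no affine points with $x=0$) converts this directly into $\#C_\alpha(\F_{q^n})-(q^n+1)=\sum_\beta\bigl(\#C_{\alpha,\beta}(\F_{q^n})-(q^n+1)\bigr)$. Alternatively one can argue structurally: the $C_{\alpha,\beta}$ are exactly the quotients of $C_\alpha$ by the index-$p$ subgroups of the Galois group $(\F_q,+)$ of the cover $C_\alpha\to\P^1$, so by Kani--Rosen $\mathrm{Jac}(C_\alpha)$ is isogenous to $\prod_\beta \mathrm{Jac}(C_{\alpha,\beta})$ and the Frobenius traces add. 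Without one of these two repairs replacing your multiplicativity step, the proof does not go through.
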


\begin{theorem}\label{binary}

Assume that $q$ is an even prime power.  Let $C_{\alpha}:x(y^2+y)= \alpha(x^2+1)$ be curves over $\mathbb F_q$ for $\alpha \in \mathbb F_q^\times$. Define $S_\alpha(\mathbb F_{q^n})=\#C_\alpha(\mathbb F_{q^n})-(q^n+1)$.  Then \[
F_q(n,0,0)=q^{n-2}+\frac{q-1}{q^2}\sum_{\alpha\in \mathbb F_q^\times}\left(S_\alpha(\mathbb F_{q^n})+1\right).
\]	
\end{theorem}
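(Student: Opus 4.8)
The plan is to recognize that $F_q(n,0,0)$ is exactly the quantity $N(0,0)$ attached to the pair of functions $q_1=\Tr$ and $q_2=\rTr$ on $\F_{q^n}$, and then to feed these into Lemma~\ref{gen-q12}. This immediately rewrites
\[
F_q(n,0,0)=\frac1q\left(Z(q_1)+\sum_{\alpha\in\F_q}Z(\alpha q_1-q_2)-q^n\right),
\]
so the whole problem reduces to evaluating three kinds of zero-counts: $Z(\Tr)$, the term $Z(-q_2)=Z(\rTr)$ coming from $\alpha=0$, and the sum of $Z(\alpha q_1-q_2)$ over $\alpha\in\F_q^\times$.

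First I would dispose of the two easy counts. By Lemma~\ref{LidlNied}, $Z(\Tr)$ equals the size of the image of the $\F_q$-linear map $y\mapsto y^q-y$ on $\F_{q^n}$, whose kernel is $\F_q$; hence $Z(\Tr)=q^{n-1}$. For the $\alpha=0$ term I would use $\rTr(x)=\Tr(x^{-1})$ for $x\ne0$ together with the fact that inversion permutes $\F_{q^n}^\times$; counting the nonzero solutions of $\Tr(x^{-1})=0$ and adjoining $x=0$ (with the convention $\rTr(0)=0$ already implicit in the proof of Lemma~\ref{CZ}) again gives $Z(-q_2)=Z(\rTr)=q^{n-1}$.

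The substantive step is the sum over $\alpha\in\F_q^\times$, and here I would chain the three curve lemmas. For fixed $\alpha\ne0$, Lemma~\ref{CZ} turns $Z(\alpha q_1-q_2)$ into the point count of $x(y^q-y)=\alpha x^2-1$; because $q$ is even this curve is literally $x(y^q+y)=\alpha x^2+1$, and Lemma~\ref{CaC} shows that its number of $\F_{q^n}$-points does not depend on $\alpha$, being equal to $\#C(\F_{q^n})$ for $C:x(y^q+y)=x^2+1$. Consequently every term satisfies $Z(\alpha q_1-q_2)=\tfrac1q(\#C(\F_{q^n})+q-2)$, and Lemma~\ref{CCa} supplies the fiber-product decomposition $\#C(\F_{q^n})-(q^n+1)=\sum_{\alpha\in\F_q^\times}S_\alpha(\F_{q^n})$. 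Summing over the $q-1$ values of $\alpha$ then yields
\[
\sum_{\alpha\in\F_q^\times}Z(\alpha q_1-q_2)=\frac{q-1}{q}\left(q^n+q-1+\sum_{\alpha\in\F_q^\times}S_\alpha(\F_{q^n})\right).
\]
I expect this chaining — recognizing the $\alpha$-independence and passing from the single curve $C$ to the sum of the $S_\alpha$ — to be the main obstacle; everything upstream is bookkeeping of which zero-count corresponds to which curve.

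Finally I would substitute the three evaluations into the formula of Lemma~\ref{gen-q12}. The purely power-of-$q$ contributions $\tfrac1q(2q^{n-1}-q^n)+(q-1)q^{n-2}$ collapse to $q^{n-2}$, while the remaining piece is $\frac{q-1}{q^2}\bigl(\sum_{\alpha}S_\alpha(\F_{q^n})+(q-1)\bigr)$; absorbing the $q-1$ as $\sum_{\alpha\in\F_q^\times}1$ rewrites this as $\frac{q-1}{q^2}\sum_{\alpha\in\F_q^\times}(S_\alpha(\F_{q^n})+1)$, which is the claimed identity. This last simplification is routine algebra needing only care with the factors of $q$.
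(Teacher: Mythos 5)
Your proposal is correct and takes essentially the same route as the paper's own proof: both start from Lemma~\ref{gen-q12} with $q_1=\Tr$, $q_2=\rTr$, evaluate $Z(q_1)=Z(q_2)=q^{n-1}$, and then chain Lemma~\ref{CZ} (zero-counts to curve points), Lemma~\ref{CaC} ($\alpha$-independence of the point count of $x(y^q+y)=\alpha x^2+1$), and Lemma~\ref{CCa} (fiber-product decomposition) to arrive at $q^{n-2}+\frac{q-1}{q^2}\sum_{\alpha\in\F_q^\times}\left(S_\alpha(\F_{q^n})+1\right)$. The only difference is expository: you make explicit the $\alpha=0$ term and the convention at $x=0$, which the paper leaves implicit.
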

\begin{proof}
	Let $q_1(x)=\Tr(x)$ and $q_2(x)=\rTr(x)$ be functions from $\mathbb F_{q^n}$ to $\mathbb F_{q}$. By Lemma \ref{gen-q12}
	\begin{align*}
	qF_q(n,0,0)&=Z(q_1)+Z(q_2)+\sum_{\alpha\in\mathbb F_q^\times}Z(\alpha q_1+q_2)-q^n\\&=q^{n-1}+q^{n-1}+\sum_{\alpha\in\mathbb F_q^\times}Z(\alpha q_1+q_2)-q^n\\ &=
	q^{n-1}+\sum_{\alpha\in\mathbb F_q^\times}\left(Z(\alpha  q_1+ q_2)-q^{n-1}\right).
	\end{align*} By Lemma \ref{CZ} and Lemma \ref{CaC} 
		\begin{align*}
	qF_q(n,0,0)
	&=	q^{n-1}+\sum_{\alpha\in\mathbb F_q^\times}\left(\frac{\#C(\mathbb F_{q^n})+q-2}{q}-q^{n-1}\right)\\
	&=	q^{n-1}+\frac{q-1}{q}\left(\#C(\mathbb F_{q^n})-(q^n+1)+q-1\right)
	\end{align*}
	By Lemma \ref{CCa}
	\begin{align*}
	F_q(n,0,0)
	&=q^{n-2}+\frac{q-1}{q^2}\left(\left(\sum_{\alpha\in \mathbb F_q^\times}\left(\#C_\alpha(\mathbb F_{q^n})-(q^n+1)\right)\right)+q-1\right)\\
	&=q^{n-2}+\frac{q-1}{q^2}\sum_{\alpha\in \mathbb F_q^\times}\left(S_\alpha(\mathbb F_{q^n})+1\right).
	\end{align*}
	
\end{proof}
Similarly, we can prove the following theorem. We will skip similar calculation details.
\begin{theorem}\label{tekler}

Assume that $q$ is prime $p$-power. Let $C_{\alpha,\beta}:x(y^p-y)= \beta(\alpha x^2-1)$ be curves over $\mathbb F_q$ for $\alpha \in \mathbb F_q^\times$ and $\beta \in F_q^\times/F_p^\times$ as representative set in $F_q^\times$. Define $S_{\alpha,\beta}(\mathbb F_{q^n})=\#C_{\alpha,\beta}(\mathbb F_{q^n})-(q^n+1)$.  Then \[
	F_q(n,0,0)=q^{n-2}+\frac{(q-1)^2}{q^2}+\frac1{q^2}\sum_{\alpha\in\mathbb F_q^\times}\sum_{\beta\in \mathbb F_q^\times/\mathbb F_p^\times}S_{\alpha,\beta}(\mathbb F_{q^n})
	\]	
\end{theorem}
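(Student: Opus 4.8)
The plan is to follow the proof of Theorem~\ref{binary} almost verbatim, substituting the two ingredients that were specific to even characteristic. I would begin by setting $q_1(x)=\Tr(x)$ and $q_2(x)=\rTr(x)$ as functions $\mathbb F_{q^n}\to\mathbb F_q$ and applying Lemma~\ref{gen-q12}, which gives
\[
qF_q(n,0,0)=Z(q_1)+\sum_{\alpha\in\mathbb F_q}Z(\alpha q_1-q_2)-q^n.
\]
Isolating the term $\alpha=0$, which contributes $Z(-q_2)=Z(q_2)$, and using $Z(q_1)=Z(q_2)=q^{n-1}$ exactly as in the proof of Theorem~\ref{binary} (Lemma~\ref{LidlNied} applied to $q_1$, and to $x\mapsto x^{-1}$ for $q_2$), this collapses to
\[
qF_q(n,0,0)=q^{n-1}+\sum_{\alpha\in\mathbb F_q^\times}\left(Z(\alpha q_1-q_2)-q^{n-1}\right).
\]

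Next I would apply Lemma~\ref{CZ} in the form $\#C_\alpha(\mathbb F_{q^n})=qZ(\alpha q_1-q_2)-q+2$ for the curve $C_\alpha:x(y^q-y)=\alpha x^2-1$, and rewrite each summand as $\bigl(\#C_\alpha(\mathbb F_{q^n})-(q^n+1)+(q-1)\bigr)/q$. The $q-1$ copies of the constant $(q-1)/q$ then assemble into $(q-1)^2/q$, leaving
\[
qF_q(n,0,0)=q^{n-1}+\frac{(q-1)^2}{q}+\frac1q\sum_{\alpha\in\mathbb F_q^\times}\left(\#C_\alpha(\mathbb F_{q^n})-(q^n+1)\right).
\]

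The genuine point of departure from the even case is the final step. In characteristic two, Lemma~\ref{CaC} lets one replace every $C_\alpha$ by a single curve, since each $\alpha\in\mathbb F_q^\times$ has odd order and is therefore a square; for odd $p$ this reduction is unavailable, so I would instead retain the $\alpha$-sum and apply the fiber-product decomposition of Lemma~\ref{CCap}, namely $\#C_\alpha(\mathbb F_{q^n})-(q^n+1)=\sum_{\beta\in\mathbb F_q^\times/\mathbb F_p^\times}S_{\alpha,\beta}(\mathbb F_{q^n})$. Substituting this and dividing by $q$ produces the claimed double sum. I do not anticipate any analytic difficulty, since each step merely cites a lemma already established; the only real care lies in the bookkeeping, namely correctly aggregating the additive constant into $(q-1)^2/q$ and recognizing that the $\alpha$-sum can no longer be simplified away as in the characteristic-two argument.
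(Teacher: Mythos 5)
Your proposal is correct and follows essentially the same route as the paper: Lemma~\ref{gen-q12} to reduce to the sums $Z(\alpha q_1-q_2)$, Lemma~\ref{CZ} to convert these into point counts of the curves $C_\alpha$, and Lemma~\ref{CCap} to decompose each $C_\alpha$ into the curves $C_{\alpha,\beta}$, with the constants aggregating into $(q-1)^2/q^2$ exactly as you describe. The paper's own proof merely abbreviates the bookkeeping (citing the calculation in Theorem~\ref{binary}), and your write-up correctly supplies those omitted details, including the observation that the characteristic-two reduction of Lemma~\ref{CaC} is unavailable here.
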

\begin{proof}
	Let $q_1(x)=\Tr(x)$ and $q_2(x)=\rTr(x)$ be functions from $\mathbb F_{q^n}$ to $\mathbb F_{q}$. By Lemma \ref{gen-q12}
	\begin{align*}
	qF_q(n,0,0)&=
	q^{n-1}+\sum_{\alpha\in\mathbb F_q^\times}\left(Z(\alpha  q_1-q_2)-q^{n-1}\right).
	\end{align*} By Lemma \ref{CZ} and Lemma \ref{CCap}
	\begin{align*}
	F_q(n,0,0)
	&=	q^{n-2}+\frac{(q-1)^2}{q^2}+\frac{1}{q^2}\sum_{\alpha\in\mathbb F_q^\times}\left(\#C_\alpha(\mathbb F_{q^n})-(q^n+1)\right)\\
	&=q^{n-2}+\frac{(q-1)^2}{q^2}+\frac1q\sum_{\alpha\in \mathbb F_q^\times}\left(\sum_{\beta\in \mathbb F_q^\times/\mathbb F_p^\times}\left(\#C_{\alpha,\beta}(\mathbb F_{q^n})-(q^n+1)\right)\right)\\
	&=q^{n-2}+\frac{(q-1)^2}{q^2}+\frac{1}{q^2}\sum_{\alpha\in\mathbb F_q^\times}\sum_{\beta\in \mathbb F_q^\times/\mathbb F_p^\times}S_{\alpha,\beta}(\mathbb F_{q^n}).
	\end{align*}	
\end{proof}
\begin{remark}
    We note that the curve $C_{\alpha}:x(y^2+y)= \alpha(x^2+1)$  over $\mathbb F_{2^r}$ for $\alpha \in \mathbb F_q^\times$ is non-singular. Therefore by using genus-degree formula it has genus  1. On the other hand, for an odd prime power $q=p^r$, the curve $C_{\alpha,\beta}:x(y^p-y)= \beta(\alpha x^2-1)$ over $\mathbb F_q$ for $\alpha \in \mathbb F_q^\times$ and $\beta \in F_q^\times/F_p^\times$ has genus $p-1$. This can be seen form \cite[Theorem 3.7.8]{stichtenoth2009algebraic} as the Artin-Schreier
extension of the rational function field $\F_q(x)$ defined by $y^p-y= \beta(\alpha x^2-1)/x$ has only ramified rational places $x$ and $1/x$ of $\F_q(x)$.
\end{remark}
\section{Examples}
In this section, we illustrate Theorems \ref{numirr} and \ref{tekler} for $q=4$ and $q=9$, respectively.
\label{sec:examples}
\begin{example}
Let $q=4$ and $n=5$. Let $C_{\alpha,\beta}:x(y^2-y)= \alpha( x^2+1)$ be curves over $\mathbb F_4$ for $\alpha \in \mathbb F_4^\times$. Let $S_{\alpha}(\mathbb F_{q^n})$ be defined as in Theorem \ref{binary}. Then by using Magma \cite{magma} we get
\begin{equation}\sum_{\alpha\in\mathbb F_4^\times}S_{\alpha}(\mathbb F_{4^5}+1)=-176.   
\end{equation}
Then  Theorem \ref{binary} gives \begin{equation}\label{rslt1}
 F_4(5,0,0)=64-\frac{528}{16}=31.   
\end{equation}
On the other hand, we, in Table \ref{tbl1}, tabulate the number of elements in $\F_{4^n}$ with both vanishing  trace and reciprocal trace. We get the values in Table \ref{tbl1} by exhaustive counting. Note that \eqref{rslt1}  complies with the corresponding value in the Table \ref{tbl1}.
\begin{table}[!h]
\centering
\caption{The values of $F_4(n,0,0)$.}
\begin{tabular}{|c|c|c|c|c|c|c|c|c|c|}
\hline
n & 3 & 4 & 5 & 6 & 7 & 8 & 9 & 10 \\ \hline
$F_4(n,0,0)$ & 7 & 16 & 31 & 268 & 1135 & 4096 & 16279 & 64684 \\ \hline
\end{tabular}
\label{tbl1}
\end{table}

Now we calculate the number of monic irreducible polynomials of degree $5$ in $\F_4[x]$ with  vanishing  trace and reciprocal trace. By  Theorem \ref{numirr} we have 
\begin{equation}
\bar{I_4}(5,0,0)=\frac{1}{5}\big(\mu(5)(F_4(1,0,0)+\mu(1)(F_4(1,0,0)\big).
\end{equation}
Besides, Theorem \ref{binary} gives $F_4(1,0,0)=1$ and $F_4(5,0,0)=31$ as above. Therefore we get
\begin{equation}\label{polresult1}
\bar{I_4}(5,0,0)=6.
\end{equation}
Similarly, we counted exhaustively the number $\bar{I_4}(n,0,0)$ of irreducible polynomials for $n=3,4,\ldots,10$ and tabulated them in Table \ref{tbl2}. We see that \eqref{polresult1} complies with the value given in Table \ref{tbl2}. 
\begin{table}[!h]
\centering
\caption{The number $\bar{I_4}(n,0,0)$ of monic irreducible polynomials}
\begin{tabular}{|c|c|c|c|c|c|c|c|c|c|}
\hline
n & 3 & 4 & 5 & 6 & 7 & 8 & 9 & 10 \\ \hline
$\bar{I_4}(n,0,0)$ & 0 & 0 & 6 & 34 & 162 & 480 & 1808 & 6366 \\ \hline
\end{tabular}
\label{tbl2}
\end{table}
\end{example}
\begin{example}
Let $q=9$ and $n=5$. Let $C_{\alpha,\beta}:x(y^3-y)= \beta(\alpha x^2-1)$ be curves over $\mathbb F_9$, where $\alpha \in \mathbb F_9^\times$ and the elements $\beta$ are the representatives of the quotient group $ F_9^\times/F_3^\times$. Let $S_{\alpha,\beta}(\mathbb F_{q^n})$ be defined as in Theorem \ref{tekler}. Then by using Magma \cite{magma} we get \begin{equation}\sum_{\alpha\in\mathbb F_9^\times}
 \sum_{\beta\in \mathbb F_9^\times/\mathbb F_3^\times}S_{\alpha,\beta}(\mathbb F_{9^5})=5768.   
\end{equation}
Then by Theorem \ref{tekler} we have \begin{equation}\label{lastresult}
 F_9(5,0,0)=729+\frac{64+5768}{81}=801. 
\end{equation}
We see that \eqref{lastresult} is equal to the value that we obtain by counting the number of elements in $\F_9$ with vanishing trace and reciprocal trace, see Table \ref{tbl3}.
\begin{table}[!h]
\centering
\caption{ The values of $F_9(n,0,0)$.}
\begin{tabular}{|c|c|c|c|c|c|c|c|}
\hline
n & 3 & 4 & 5 & 6 & 7 & 8  \\ \hline
$F_9(n,0,0)$ & 9& 9 & 89& 801 & 6561 &57904 \\ \hline
\end{tabular}
\label{tbl3}
\end{table}

We know by Theorem \ref{numirr} that the number of monic irreducible polynomials of degree $5$ over $\F_9[x]$ with vanishing trace and reciprocal trace satisfies
\begin{equation}
\bar{I_9}(5,0,0)=\frac{1}{5}\big(\mu(5)(F_9(1,0,0)+\mu(1)(F_9(5,0,0)\big).
\end{equation}
By  Theorem \ref{tekler} we get $F_9(1,0,0)=1$ and $F_9(5,0,0)=801$. Therefore we obtain
\begin{equation}\label{polresult2}
\bar{I_9}(5,0,0)=160.
\end{equation}
Then also we see that the values in Table \ref{tbl4} and  \eqref{polresult2} are equal.
\begin{table}[h]
\centering
\caption{The number $\bar{I_9}(n,0,0)$ of monic irreducible polynomials}
\begin{tabular}{|c|c|c|c|c|c|c|c|c|}
\hline
n & 3 & 4 & 5 & 6 & 7 & 8 & 9  \\ \hline
$\bar{I_9}(n,0,0)$ & 0 & 0 & 160 & 1080 & 8272 &66500 &  530592 \\ \hline
\end{tabular}
\label{tbl4}
\end{table}
\end{example}
\section{Pseudorandom sequences}
\label{sec:pseudo}
Pseudorandom sequence is a sequence of numbers generated deterministically and looks random. 
The quality of a pseudorandom sequence are screened not only by statistical test packages 
(for example L'Ecuyer's TESTU01 \cite{l2007testu01}, Marsaglia's Diehard \cite{marsaglia1996diehard} or the NIST battery \cite{nist}) but also by theoretical results on certain measures of pseudorandomness, see \cite{Gya2013,TW2007} and references therein.

In some applications such as cryptography we need a large family of good pseudorandom sequences and we need to provide some bounds on several figures of merit \cite{S2017}. 
In this section we consider the family complexity (short $f$-complexity) and the cross-correlation measure of order $\ell$ of families of sequences.
We start with their definitions and then we define a family of sequences with good $f$-complexity and the cross-correlation measure. In this section we give an upper bound on the number of distinct families by using Theorems \ref{numirr} and \ref{tekler}.
Ahlswede et al.~\cite{AKMS2003} introduced the $f$-complexity as follows.

\begin{definition}
The \textit{$f$-complexity} $C(\sF)$ of a family $\sF$ of binary sequences $E_N \in \{-1,+1\}^N$ of length $N$ 
is the greatest integer $j \geq 0$ such that for any $1 \leq i_1 < i_2< \cdots < i_j \leq N$ and any $\epsilon_1,\epsilon_2, \ldots,  \epsilon_j \in \{-1,+1\}$ 
there is a sequence $E_N = \{e_1,e_2,\ldots , e_N\}\in \sF$ with $$e_{i_1}=\epsilon_1,e_{i_2}=\epsilon_2, \ldots ,e_{i_j}=\epsilon_j.$$
\end{definition}
It is easy to see that $
2^{C(\sF)} \leq |\sF|$,
where $|\sF|$ denotes the size of the family $\sF$.

Gyarmati et al.~\cite{GMS2014} introduced the cross-correlation measure of order $\ell$.


\begin{definition} \label{def.ccm}
The \textit{cross-correlation measure of order $\ell$} of a family $\sF$ of binary sequences $E_{i,N} = (e_{i,1},e_{i,2},\ldots , e_{i,N}) \in \{-1+1\}^N$,  $i=1,2, \ldots , F$, 
is defined as 
$$
\Phi_\ell(\sF) = \max_{M,D,I}\left| \sum_{n=1}^{M}{e_{i_1,n+d_1} \cdots e_{i_\ell,n+d_\ell}}\right|,
$$
where $D$ denotes an $\ell$ tuple $(d_1,d_2,\ldots , d_\ell)$ of integers such that $0 \leq d_1 \leq d_2 \leq \cdots \leq d_\ell < M+d_\ell \leq N$ and $d_i \neq d_j$ 
if $E_{i,N} = E_{j,N}$ for $i \neq j$ and $I$ denotes an $\ell$ tuple $(i_1,i_2, \ldots , i_\ell)\in\{1,2,\ldots ,F\}^\ell$.
\end{definition}

In \cite{yayla2020families}, a family of sequences of Legendre symbols generated from some irreducible polynomials with high family complexity and small cross-correlation
measure up to a large order $\ell$ was given. Similarly, it was shown that its dual family has good measures. Let $p > 2$ be a prime number, $n \geq 5$ and $\Omega_{p,n}$ be a set of irreducible polynomials over $\F_p$ of degree $n$ defined  as 
$$\Omega_{p,n} =\{ f(x) = x^n + a_2x^{n-2} + a_3x^{n-3} + \cdots + a_{n-2}x^{2} + a_n \in \F_p[x], a_2,a_3 \neq 0\}.$$  Let $f\in \Omega_{p,n}$,
$f_i(X) = i^n f(X/i)$
for  $i \in \{1,2,\ldots , p-1 \}$ and $\sF_f$ be a family of binary sequences defined as 
\be \label{eq:legendre} {\sF_f}=\left\{ \left(\frac{f_i(j)}{p}\right)_{j=1}^{p-1} : i=1,\ldots,p-1\right\},
\ee
and $\overline{\sF_f}$ be the dual of $\sF_f$.
Then it is shown in \cite{yayla2020families} that they have good cross-correlation measure and family complexity as
 $$\Phi_k({\sF_f})\ll n k p^{1/2}\log p  \mbox{ and }  \Phi_k(\overline{\sF_f})\ll n k p^{1/2}\log p$$
for each integer $k \in \{1,2,\ldots , p-1\}$ and
$$C({\sF_f})\ge \left(\frac{1}{2}-o(1)\right) \frac{\log(p/n^2)}{\log 2}  \mbox{ and }  C(\overline{\sF_f})\ge \left(\frac{1}{2}-o(1)\right) \frac{\log(p/n^2)}{\log 2}.$$

We have the family size $|\sF_f| = p$ for the family given in \eqref{eq:legendre}. On the other hand, the number $\#\{\sF_f \vert f \in \Omega_{p,n}\}$ of distinct families that can be constructed as in \eqref{eq:legendre} not known. Here, we give a partial solution for this problem, that is, an upper bound on the number of distinct families. 
\begin{corollary}
Let $C_{\alpha}:x(y^p+y)= \alpha(x^2+1)$ be curves over $\mathbb F_p$ for $\alpha \in \mathbb F_p^\times$. Define $S_\alpha(\mathbb F_{p^n})=\#C_\alpha(\mathbb F_{p^n})-(p^n+1)$.  Then \[\#\{\sF_f \vert f \in \Omega_{p,n}\} < \frac1n\sum_{d\mid n, p\nmid d} \mu(d)\left(F_p(n/d,0,0)-[p \text{ divides }n]p^{n/pd}\right)
,\]
where	
\[ F_p(n,0,0) = p^{n-2}+\frac{(p-1)^2}{p^2}+\frac1{p^2}\sum_{\alpha\in\mathbb F_p^\times}S_{\alpha}(\mathbb F_{p^n}).
	\]	
	
\end{corollary}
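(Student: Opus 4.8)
The plan is to establish the corollary as a direct consequence of the counting formula in Theorem~\ref{numirr} together with the explicit evaluation of $F_p(n,0,0)$ coming from Theorem~\ref{tekler} (specialized to $q=p$, so that $\mathbb{F}_q^\times/\mathbb{F}_p^\times$ is trivial and the double sum collapses to a single sum over $\alpha$). The right-hand side of the displayed inequality is therefore nothing but $\bar{I}_p(n,0,0)$, the number of monic irreducible polynomials of degree $n$ over $\mathbb{F}_p$ with vanishing trace and vanishing reciprocal trace. So the entire content of the statement reduces to showing that
\[
\#\{\sF_f \mid f\in\Omega_{p,n}\} < \bar{I}_p(n,0,0).
\]
First I would check that the defining conditions on $\Omega_{p,n}$ really do force vanishing trace and reciprocal trace: a polynomial $f(x)=x^n+a_2x^{n-2}+\cdots+a_{n-2}x^2+a_n$ has no $x^{n-1}$ term, so $\Tr(f)=0$, and no $x^1$ term, so the coefficient $c_1$ vanishes, giving $\rTr(f)=c_1/c_0=0$ (using $a_n\ne 0$, which holds since $f$ is irreducible of degree $n\ge 5$). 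Hence $\Omega_{p,n}$ is precisely the set counted by $\bar{I}_p(n,0,0)$, and $|\Omega_{p,n}|=\bar{I}_p(n,0,0)$.

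The remaining and genuinely new step is to account for the strict inequality, i.e.\ to argue that the number of \emph{distinct} families $\sF_f$ is strictly smaller than the number of polynomials $f\in\Omega_{p,n}$. The key observation is that the map $f\mapsto \sF_f$ is not injective: distinct polynomials can produce the same family of Legendre-symbol sequences. Concretely, the construction $f_i(X)=i^n f(X/i)$ runs over the scalings $i\in\{1,\dots,p-1\}$, and I would show that two polynomials $f$ and $g$ related by such a scaling (or more precisely, lying in the same orbit under the substitution $x\mapsto cx$ followed by renormalization) yield identical families $\sF_f=\sF_g$, since the set of sequences $\{(f_i(j)/p)_j : i\}$ depends only on the orbit of $f$. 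Because each such orbit has size at least two when $p>2$ (the scalar $c\ne 1$ gives a genuinely different polynomial while leaving the family invariant), the polynomials in $\Omega_{p,n}$ collapse into strictly fewer families, yielding the strict inequality.

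I expect the main obstacle to be pinning down the precise equivalence relation on $\Omega_{p,n}$ induced by $f\mapsto\sF_f$ and verifying that every equivalence class has size at least two, so that no $f$ gives a singleton orbit that would force equality rather than strict inequality. One must confirm that the scaling action $x\mapsto cx$ preserves the shape of $\Omega_{p,n}$ (it sends a polynomial with vanishing $x^{n-1}$ and $x$ coefficients to another such polynomial) and that it never fixes an element of $\Omega_{p,n}$ pointwise in a way that would prevent a nontrivial collapse; since $a_2\ne 0$ and $a_3\ne 0$ are required, the scaling acts freely enough on the coefficient data to guarantee orbits of size $>1$. Once this orbit structure is confirmed, the strict inequality follows immediately by combining $|\Omega_{p,n}|=\bar{I}_p(n,0,0)$ with the non-injectivity of $f\mapsto\sF_f$, and substituting the formula of Theorem~\ref{tekler} for $F_p(n,0,0)$ completes the proof.
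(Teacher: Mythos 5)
Your core reduction is the same as the paper's: identify the right-hand side as $\bar{I}_p(n,0,0)$ by substituting the $q=p$ case of Theorem~\ref{tekler} (where $\F_q^\times/\F_p^\times$ is trivial, collapsing the double sum) into Theorem~\ref{numirr}, and then bound the number of distinct families by the number of polynomials that index them. Where you go beyond the paper is the strictness: the paper's own proof is three sentences long, says only that the families are constructed from polynomials counted by $\bar{I}_p(n,0,0)$, and never justifies why the inequality is strict rather than weak. Your orbit argument supplies exactly that missing step, and it is sound: for $c\in\F_p^\times$ the rescaling $f_c(X)=c^n f(X/c)$ stays in $\Omega_{p,n}$ (irreducibility and the vanishing/nonvanishing coefficient patterns are preserved), satisfies $(f_c)_i=f_{ci}$ so that $\sF_{f_c}=\sF_f$ as sets of sequences, and fixes $f$ only when $a_2c^2=a_2$ and $a_3c^3=a_3$, i.e.\ $c=1$; hence every orbit has size $p-1\ge 2$ and the map $f\mapsto\sF_f$ is nowhere injective.

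One intermediate claim of yours is, however, false: $\Omega_{p,n}$ is \emph{not} ``precisely the set counted by $\bar{I}_p(n,0,0)$.'' That quantity counts \emph{all} monic irreducible polynomials of degree $n$ with vanishing $x^{n-1}$ and $x$ coefficients, whereas $\Omega_{p,n}$ additionally requires $a_2\neq 0$ and $a_3\neq 0$, so one only has the containment $\Omega_{p,n}\subseteq\{f \text{ irreducible}: \Tr(f)=\rTr(f)=0\}$ and thus $|\Omega_{p,n}|\le \bar{I}_p(n,0,0)$, with no reason for equality in general. The error is harmless for the conclusion, since your chain becomes $\#\{\sF_f \mid f\in\Omega_{p,n}\}<|\Omega_{p,n}|\le \bar{I}_p(n,0,0)$, which is all that is needed; but the equality assertion should be deleted rather than ``checked,'' as you propose to do. A final small gap: if $\Omega_{p,n}$ were empty, the orbit collapse gives nothing and strictness instead requires $\bar{I}_p(n,0,0)>0$; this does hold for $n\ge 5$ because the dominant term of the formula is $p^{n-2}/n$, but a one-line remark to that effect would make the argument airtight.
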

\begin{proof}
    The family $\sF$ is constructed by using irreducible polynomials $f \in \overline{I}_p(n,0,0)$. Hence we have the case $q=p^r$ for $r=1$. By Theorem \ref{numirr},  we get the  number of irreducible polynomials in terms of $F_p(n,0,0)$. On the other hand, as $q=p$, Theorem \ref{tekler} gives the result.
\end{proof}

\section{Conclusion}
In this paper, we proved the formula for number $\bar{I}_q(n,0,0)$ of irreducible polynomial of degree $n$ over the finite field $\F_q$, $q=p^r$, such that the terms $x^{n-1}$ and $x$ vanish. Our formula reduces the problem of finding $\bar{I}_q(n,0,0)$ into getting the roots of the L-polynomial of the corresponding algebraic curve defined over $\F_q$. The latter is an easier problem as the genus of the curve is $p-1$. In particular, they are elliptic curves when $q=2^r$ and the L-polynomial has only two roots.
\section*{Acknowledgments}
Yağmur Çakıroğlu and Oğuz Yayla are supported by the Scientific and Technological Research Council of Turkey (TÜBİTAK) under Project No: \mbox{116R026}. Emrah Sercan Yılmaz is also supported by TÜBİTAK under Project No: \mbox{117F274}.

	\bibliographystyle{plain}      
	\bibliography{makale}   
\end{document}